\newcommand{\strong}{\textbf}
\newcommand{\Hom}{\operatorname{Hom}}
\newcommand{\Ext}{\operatorname{Ext}}
\newcommand{\id}{\operatorname{id}}
\newcommand{\Id}{\mathrm{Id}}
\newcommand{\Aut}{\operatorname{Aut}}
\newcommand{\lhom}{\operatorname{\mathcal{H}\mathit{om}}}
\newcommand{\rhom}{\operatorname{R\mathcal{H}\mathit{om}}}
\newcommand{\lra}{\longrightarrow}
\newcommand{\ltensor}{\mathbin{\smash{\buildrel L\over\otimes}}}
\newcommand{\Ltensor}{\mathbin{{\buildrel L\over\otimes}}}
\newcommand{\compo}{\raise2pt\hbox{$\scriptscriptstyle\circ$}}
\newcommand{\M}{\mathcal{M}}
\newcommand{\OO}{\mathcal{O}}
\newtheorem{thm}{Theorem}[section]
\newtheorem{lemma}[thm]{Lemma}
\newtheorem{prop}[thm]{Proposition}
\theoremstyle{definition}
\newtheorem{defn}[thm]{Definition}
\newtheorem{remark}[thm]{Remark}
\newcommand{\Pic}{\operatorname{Pic}}
\newcommand{\rk}{\operatorname{rk}}
\newcommand{\C}{\mathbf{C}}
\newcommand{\Hilb}{\operatorname{Hilb}}
\newcommand{\EE}{\mathbb{E}}
\newcommand{\ZZ}{\mathbb{Z}}
\title{A Note on Commuting Reflection Functors for Calabi-Yau d-folds}
\author{Antony Maciocia}
\address{Department of Mathematics and Statistics\\
The University of Edinburgh\\
The King's Buildings\\ Mayfield Road\\ Edinburgh, EH9 3JZ, UK.}
\email{A.Maciocia@ed.ac.uk}
\keywords{Fourier-Mukai Transforms, Calabi-Yau varieties, simple bundles, sheaf
moduli spaces, Spherical Bundles}
\subjclass{Primary: 14F05. Secondary: 14J32, 14J60}
\keywords{}
\subjclass{}
\date{\today}
\begin{document}
\begin{abstract}
We study sets of commuting reflection functors in the derived category
of sheaves on Calabi-Yau varieties. We show that such a collection
is determined by a set of mutually orthogonal spherical objects. We
also show that when the spherical objects are locally-free sheaves
then the kernel of the composite transform parametrizes properly torsion-free
with zero-dimensional singularity sets and conversely that such a
kernel gives rise to a collection of mutually orthogonal spherical
vector bundles. We do this using a more detailed analysis of the
reason why spherical twists give equivalences.
\end{abstract}

\maketitle

\section*{Introduction}
A spherical object $e$ in an exact linear triangulated category $(T,[\ ])$ is one such
that $\dim\Hom(e,e[i])=\dim H^i(S^d,\mathbb{R})$, the Betti numbers of a
$d$-dimensional sphere for some fixed $d$. This concept is especially
useful when $T=D^b(X)$ for some
$d$-dimensional Calabi-Yau variety $X$ or when $T$ is a $d$-Calabi-Yau
category. This is because such objects have, in a suitable sense, the
fewest possible derived self-maps. There has been a great deal of
interest in them in recent years as they hold the key to understanding
the categorical structure of $T$ and its automorphism group $\Aut(T)$. For
example, it is conjectured that they give rise to a generating set for
$\Aut(T)$ in the case when $T=D(X)$, of a K3 surface. 
The spherical objects also play a central role in our understanding of
Bridgeland stability conditions for some surfaces (see \cite{BriK3})
essentially because of the central role they play in the derived
category of the surface. It is likely that they will play a similarly
crucial role in our understanding of stability conditions for higher
dimensional Calabi-Yau varieties. 

In an important paper, \cite{SeidelThomas}, Seidel and Thomas show
that certain series of spherical objects give rise to actions of the
braid group on the derived category. This is done by associating
an equivalence $\Phi_a$ of the derived category to each spherical object $a$ (known
as a spherical twist). In the K-theory of the derived category, these
are reflections and so they are sometimes called reflection
functors. It was observed in that paper that when two spherical
objects $a$ and $b$ are completely orthogonal (in other words,
$\Hom(a,b[i])=0$ for all integers $i$) then the associated spherical
twists commute. This is because
$\Phi_a\circ\Phi_b\cong \Phi_{\Phi_b(a)}\circ\Phi_b$ (see \cite[Lemma
2.11]{SeidelThomas}) and $\Phi_b(a)\cong a$ as can be checked by direct and
easy computation (see \cite[Proposition 2.12]{SeidelThomas}). Our
first result in this note is to show that the converse also holds: if
two spherical twists commute then either they are equal or the
associated spherical objects are (completely) orthogonal. 

We then turn our attention to the special case where the spherical
objects are actually vector bundles.  This is an important class of
examples. The associated spherical twists have Fourier-Mukai kernel
given by a sheaf parametrizing properly-torsion free sheaves whose
singularity set is a single point of $X$. Our second main result is to
show that this also has a converse: if $\Phi$ is an exact equivalence of
the derived categories of Calabi-Yau $d$-folds such that its
Fourier-Mukai kernel is a sheaf  parametrizing properly-torsion free sheaves whose
singularity set is zero dimensional then it must be a composite of
commuting spherical twists. The difficulty in this is to show that the
double dual of the kernel (which must be locally-free by assumption)
can be reduced essentially to a sum of (completely) orthogonal
spherical bundles. To establish this we need to generalise the
computations of $\Ext$ groups given by Mukai (\cite{MukTata}) and
which are so crucial in describing stability conditions for surfaces.

\section{Fourier-Mukai Transforms}\label{s:FMT}
In this paper we shall take a Fourier-Mukai transform (or FM transform
for short) to be an equivalence of categories of the (bounded) derived
category of sheaves $D(X)$ and $D(Y)$ on a smooth (complex) projective
varieties $X$ and  $Y$ given by correspondences of the form
$\Phi_F:E\mapsto Ry_*(x^*E\otimes F)$, where $F$ is a sheaf on
$X\times Y$ called the \strong{kernel} of the transform. These are
discussed in \cite{HuyBook} and \cite{BBHBook}.

Recall that we say that a family of sheaves $\M$ is
\strong{strongly simple} if it consists of simple sheaves and
if $\Ext^i(E,E')=0$ for all $i$ and $E\neq E'$ in $\M$. 
(see \cite{BrEquiv}):
\begin{thm}[Bridgeland]\label{t:TB} The kernel $F$ gives rise
to a FM transform if and only if the restrictions $F$ to $X$ form a
strongly simple family and $F_x\otimes K_X\cong F_x$ for all $F_x$ in
the family, where $K_X$ is the canonical bundles of $X$.
\end{thm}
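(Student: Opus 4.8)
The plan is to prove both implications through Bridgeland's fully-faithfulness criterion together with the characterisation of equivalences as the fully faithful functors whose essential image is stable under the Serre functor. The starting observation is that $\Phi_F$ sends a skyscraper to a fibre of the kernel, $\Phi_F(\OO_x)\cong F_x$ for every closed point $x\in X$, and that the collection $\{\OO_x\}_{x\in X}$ is a spanning class for $D(X)$. Throughout one works over an algebraically closed field of characteristic zero, so that the residue field at each closed point is the base field; this is what makes the skyscraper criterion available.

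First I would treat the ``if'' direction. Assuming $\{F_x\}$ is strongly simple, I would verify the numerical hypotheses of the Bondal--Orlov/Bridgeland criterion for full faithfulness applied to the spanning class $\{\OO_x\}$: simplicity of each $F_x$ gives $\Hom(\Phi_F\OO_x,\Phi_F\OO_x)\cong\C$; the orthogonality $\Ext^i(F_x,F_{x'})=0$ for $x\neq x'$ gives the vanishing of $\Hom(\Phi_F\OO_x,\Phi_F\OO_{x'}[i])$ for distinct points; and the remaining requirement that $\Hom(\Phi_F\OO_x,\Phi_F\OO_x[i])$ vanish outside the window $0\le i\le\dim X$ holds because each $F_x$ is an honest sheaf on $Y$, so its self-Exts are concentrated in degrees $[0,\dim Y]$ (the equality $\dim X=\dim Y$ then falls out of the canonical bundle step). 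This establishes that $\Phi_F$ is fully faithful, so its essential image $\mathcal{D}\subseteq D(Y)$ is an admissible triangulated subcategory equivalent to $D(X)$. The canonical bundle condition in the statement says precisely that each generator $F_x$ of $\mathcal{D}$ is fixed, up to shift, by the Serre functor $S_Y=(-)\otimes K_Y[\dim Y]$, so $\mathcal{D}$ is $S_Y$-stable; since $D(Y)$ is indecomposable (as $Y$ is connected), an admissible Serre-stable subcategory must be $0$ or everything, and as $\mathcal{D}\neq 0$ we conclude $\mathcal{D}=D(Y)$, i.e. $\Phi_F$ is an equivalence.

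For the converse I would use that any exact equivalence commutes with Serre functors, so there is a natural isomorphism $\Phi_F\circ S_X\cong S_Y\circ\Phi_F$. In particular $\Phi_F$ is fully faithful, and reversing the reasoning above the skyscraper criterion forces $\Hom(F_x,F_{x'}[i])=0$ for $x\neq x'$ and $\Hom(F_x,F_x)\cong\C$, which is exactly strong simplicity of the family $\{F_x\}$. Evaluating the isomorphism $\Phi_F\circ S_X\cong S_Y\circ\Phi_F$ on $\OO_x$, and using that $K_X$ restricts trivially to the point $x$ so that $S_X\OO_x\cong\OO_x[\dim X]$, gives $F_x[\dim X]\cong F_x\otimes K_Y[\dim Y]$; comparing cohomology sheaves yields $\dim X=\dim Y$ together with the canonical bundle condition of the statement.

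The main obstacle is the full-faithfulness criterion itself: turning the numerical orthonormality data on the spanning class into genuine full faithfulness of $\Phi_F$ requires the convolution/Postnikov argument underlying Bridgeland's theorem, and in particular the verification that $\{\OO_x\}$ is a spanning class and that the relevant $\Hom$'s are controlled in the correct degree window. The subsidiary delicate point is book-keeping the two a priori different dimensions $\dim X$ and $\dim Y$: the Ext-degree bound one gets for free lives in $[0,\dim Y]$ while the criterion is phrased with $\dim X$, and it is only the Serre-functor computation that reconciles them by forcing $\dim X=\dim Y$. Everything else is formal manipulation with the adjoints of Fourier--Mukai functors.
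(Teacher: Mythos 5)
The paper states this result without proof, simply quoting it from Bridgeland's paper \cite{BrEquiv}, and your outline is precisely the argument given there: the skyscraper spanning class, the numerical fully-faithfulness criterion for $\Phi_F$ tested on the $\OO_x$, and Serre-functor stability of the essential image to upgrade full faithfulness to an equivalence, with the converse following from compatibility of equivalences with Serre functors. Your sketch is correct and is essentially the same (indeed the only standard) approach; note also that you correctly phrase the canonical-bundle condition with $K_Y$ --- the $K_X$ in the paper's statement is a slip, harmless here since the paper only applies the theorem with $Y=X$ Calabi--Yau, where the condition is vacuous.
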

The last condition is vacuous for Calabi-Yau $d$-folds.
The theorem gives us an easy way to recognise when a family of sheaves gives
rise to an FM transform.

We aim to study a special class of Fourier-Mukai Transforms which
arise from so called spherical bundles. These were first studied by
Mukai (\cite{MukTata}) in the case where $X$ is a K3 surface. 

Notation: we let $E^{\vee}=R\lhom(E,\OO_X)$ denote the derived dual
of an object $E$ of $D(X)$.

\section{Commuting Spherical Objects}\label{s:except}
Throughout this section we assume that $X$ is a smooth Calabi-Yau variety of
dimension $d$.
\begin{defn} An object $E$ of $D(X)$ is \strong{exceptional} if
$\Ext^i(E,E)$ is a small as possible (the precise definition depends
on $X$ but we will not need to be very definite in what follows). We say that $E$ is
\strong{spherical} if 
$\dim\Ext^i(E,E)=1$ for $i=1$ or $i=d$ and is zero otherwise. We
say that $E$ is \strong{rigid} if just $\Ext^1(E,E)=0$.
\end{defn}
Note that a simple rigid sheaf on a Calabi-Yau 2 or 3-fold is automatically
exceptional and spherical by Serre duality.
To any vector bundle $E$ we can associate a canonical (surjective) map
$E\otimes\Hom(E,\OO_x)\to\OO_x$ given by evaluation. We shall denote
the domain of such maps by $E_H$ for short and the kernel by
$E_x$. This extends to a map for any object $E$ of $D(X)$. We shall
denote a choice of cone on such a map by $F_x$. Then when $E$ is a
bundle, $E_x=F_x[1]$.  

In a ground breaking paper by Seidel and Thomas \cite{SeidelThomas} it is shown
(in somewhat greater generality) that when $E$ is a spherical object
in $D(X)$,
the family of $F_x$ give rise to a Fourier-Mukai transform $D(X)\to
D(X)$, denoted $\Phi_E$ or, more usually, $T_E$ (the \strong{spherical
  twist} associated to $E$).
The kernel
of the transform is given by the shift by $-1$ of the cone on the canonical map 
$\rhom(\pi_1^* E,\pi_2^*E)\to \OO_\Delta$
given by adjunction from the composite map
\[\xy\xymatrix@C=7ex{\pi_2^*E\ar[r]^/-8pt/{\pi_2^*E\otimes
    \rho}& {\pi_2^*E\otimes\OO_\Delta}\ar[r]^{\sim}&
  {\pi_1^*E \ltensor \OO_\Delta}}\endxy\]
 where $\pi_i:X\times X\to X$  are the two projection maps and
 $\rho:\OO_{X\times X}\to\OO_\Delta$ is the canonical restriction
 map. We shall denote the functor $\rhom(\pi_1^*(E\Ltensor
 {-}),\pi_2^*E)$ by $\Psi_E$. So for all $G\in D(X)$ we have a
 triangle
\[\Phi_E(G)\to\Psi_E(G)\to G\]
which is natural in $G$ (rather unusually for triangles of functors).
Their proof that these do give
 Fourier-Mukai transforms is fairly direct although a somewhat  
more elegant proof was later given by Ploog (\cite{PloogThesis}) using a clever
choice of spanning class (see \cite{HuyBook} for further details). In this
paper, we shall give yet another less elegant but more elementary
proof in the spirit of Mukai's original paper (\cite{MukTata}). 

The main point of the \cite{SeidelThomas} paper was to show that certain
families of spherical objects give rise to a representation of the
Braid group on the derived category. As a corollary of the key
computational result they also show that if $E$ and $F$ are two
spherical objects such that $\Hom(E,F[i])=0$ for all $i$ then their FM
transforms commute. We can generalise this a little as follows.
\begin{defn}
We call a finite collection $E_i$, $1\leq i\leq n$ of 
objects of $D(X)$ \strong{strongly spherical} if
\begin{equation}
\dim\Hom(E_i,E_j[k])=\begin{cases} 1& \text{if }i=j\text{ and
    (}k=0\text{ or }k=d\text{)}\\
0& \text{otherwise}\end{cases}
\end{equation}
In other words, each of the numbers $\dim\Hom(E_i,E_j[k])$  are
as small as possible.
\end{defn}
Then for a strongly spherical collection $\Gamma=\{E_i\}_{i=1}^n$ we have a
finite cone (in the sense of limits) $E_i\boxtimes
E_i^{\vee}\to\OO_\Delta$. This has a limit (up to shift) constructed explicitly as
the cone on $\bigoplus_{i=1}^n E_i\boxtimes
E_{i}^{\vee}\to\OO_\Delta$. Denote the limit by $E_{1,2,\ldots,n}$ and
its associated integral transform by $\Phi_\Gamma$. Then the
following is an easy exercise
\begin{prop}[\cite{SeidelThomas}] For any strongly spherical
  collection $\Gamma$ of objects on a
  Calabi-Yau $d$-fold,
$\Phi_\Gamma=\Phi_{E_1}\compo\Phi_{E_{2}}\compo\cdots\compo\Phi_{E_{n}}$
\end{prop}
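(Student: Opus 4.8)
The plan is to verify the identity of integral transforms by comparing kernels, so that all the cone constructions below are taken in the honest derived category $D(X\cross X)$ (and the triple product for convolutions); equivalently one may argue with the functors directly, exploiting the naturality of the triangle $\Phi_E(G)\to\Psi_E(G)\to G$ noted above. Write $\Psi_i=\Psi_{E_i}$ and $T_i=\Phi_{E_i}$, and recall that, up to the shift conventions fixed above, $T_i$ is the cone of the evaluation $\Psi_i\to\id$ while $\Phi_\Gamma$ is the cone of $\bigoplus_{i=1}^n\Psi_i\to\id$. I argue by induction on $n$; the case $n=1$ is exactly the definition of $\Phi_\Gamma$.

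The sole computational input is orthogonality. Since $\Psi_j(G)=\Hom^\bullet(E_j,G)\otimes E_j$, for $i\neq j$ we have
\[\Psi_i\circ\Psi_j(G)\cong\Hom^\bullet(E_j,G)\otimes\Hom^\bullet(E_i,E_j)\otimes E_i=0,\]
because $\Hom(E_i,E_j[k])=0$ for all $k$ by strong sphericity; at the level of kernels this is the vanishing of the convolution $Q_i\ast Q_j$ of the kernels $Q_i=\rhom(\pi_1^*E_i,\pi_2^*E_i)$, the fibre integral being $\rhom(E_i,E_j)=0$. This already explains the result heuristically: writing $P_i=\mathrm{Cone}(Q_i\to\OO_\Delta)$, so that $[P_i]=[\OO_\Delta]-[Q_i]$ in K-theory with $\OO_\Delta$ the convolution unit, we get $[P_1\ast\cdots\ast P_n]=\prod_i([\OO_\Delta]-[Q_i])=[\OO_\Delta]-\sum_i[Q_i]$, since every convolution of two or more distinct $Q$'s vanishes and, because each index occurs exactly once, no self-convolution $Q_i\ast Q_i$ ever appears. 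This is precisely why the full spherical condition is not needed here, only pairwise orthogonality.

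To promote this to an isomorphism of objects I run the induction. Put $\Gamma'=\{E_1,\dots,E_{n-1}\}$ and assume $\Phi_{\Gamma'}=T_1\circ\cdots\circ T_{n-1}$ is the cone of $\bigoplus_{i<n}\Psi_i\to\id$. Evaluating the triangle $\bigoplus_{i<n}\Psi_i\to\id\to\Phi_{\Gamma'}$ on $\Psi_n(G)$ and using $\Psi_i\circ\Psi_n=0$ for all $i<n$ kills the left-hand term, giving a natural isomorphism $\Phi_{\Gamma'}\circ\Psi_n\cong\Psi_n$. Composing the triangle $\Psi_n\to\id\to T_n$ on the left with $\Phi_{\Gamma'}$ then yields a triangle $\Psi_n\to\Phi_{\Gamma'}\to\Phi_{\Gamma'}\circ T_n$, exhibiting the composite as the cone of a natural map $\Psi_n\to\Phi_{\Gamma'}$. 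Finally I apply the octahedral axiom to the composable pair $\bigoplus_{i<n}\Psi_i\hookrightarrow\bigoplus_{i\le n}\Psi_i\to\id$, whose cones $\Psi_n$, $\Phi_{\Gamma'}$ and $\Phi_\Gamma$ fit into the octahedral triangle $\Psi_n\to\Phi_{\Gamma'}\to\Phi_\Gamma$. Matching this with the previous triangle identifies $\Phi_{\Gamma'}\circ T_n\cong\Phi_\Gamma$ and closes the induction.

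I expect the only real obstacle to be this last step: cones are not functorial in a triangulated category, so the two triangles $\Psi_n\to\Phi_{\Gamma'}\to(-)$ must be matched by an actual isomorphism, which requires checking that the two maps $\Psi_n\to\Phi_{\Gamma'}$ coincide — both are induced by the evaluation $\Psi_n\to\id$ followed by the structure map $\id\to\Phi_{\Gamma'}$. This compatibility is exactly what the naturality of the defining triangle provides, and it is cleanest to carry out the whole argument on Fourier--Mukai kernels over $X\cross X\cross X$, where the objects are genuine, convolution is associative with unit $\OO_\Delta$, and the orthogonality reads $Q_i\ast Q_j=0$, so that the octahedral comparison becomes a statement about actual complexes rather than about functors.
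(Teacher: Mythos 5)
Your proof is correct, and it supplies an argument where the paper offers none: the proposition is stated as ``an easy exercise'' with a citation to Seidel--Thomas, so there is no in-text proof to compare against. Your route --- reduce to kernels on $X\cross X$, use $\Psi_i\circ\Psi_j=0$ for $i\neq j$ (equivalently $Q_i\ast Q_j=0$, the fibre integral being $\rhom(E_i,E_j)=0$), deduce $\Phi_{\Gamma'}\ast Q_n\cong Q_n$, and then match the resulting triangle against the octahedron for $\bigoplus_{i<n}Q_i\to\bigoplus_{i\le n}Q_i\to\OO_\Delta$ --- is exactly the standard one underlying the Seidel--Thomas commutation lemma, and you correctly isolate the only delicate point: the two triangles on $Q_n\to P_{\Gamma'}$ must be shown to have the \emph{same} base morphism before their cones can be identified. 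Your claim that both maps are the composite $Q_n\to\OO_\Delta\to P_{\Gamma'}$ does check out (for the convolution triangle this is bifunctoriality of $\ast$ applied to $\OO_\Delta\to P_{\Gamma'}$ and $Q_n\to\OO_\Delta$; for the octahedron it is the defining commuting square through $B\to C(u)$), and your decision to run the whole argument on kernels, where these are morphisms of honest objects rather than natural transformations, is the right way to make the cone comparison legitimate.

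Two small bookkeeping points. First, note that only pairwise orthogonality is used, never the full spherical condition --- your K-theory heuristic makes this transparent, and it is worth saying explicitly since the paper's later converse (Theorem \ref{t:commute}) does use sphericity. Second, be a little careful with the paper's shift normalization: the paper takes $\Phi_E$ to be the cone shifted by $[-1]$, so the literal $n$-fold composite differs from the unshifted statement by $[-(n-1)]$; since the paper defines $E_{1,\ldots,n}$ only ``up to shift,'' your choice to work with the honest cones $T_i$ is the consistent reading, but you should state once which normalization makes the displayed identity hold on the nose.
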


In fact, there is a  converse:
\begin{thm} \label{t:commute}
Suppose $E$ and $F$ are two spherical objects 
in $D(X)$ such that $\Phi_E$ and $\Phi_F$ are distinct.
Then $\Phi_E\compo\Phi_F\cong\Phi_F\compo\Phi_E$ implies that $F\in E^\perp$.
\end{thm}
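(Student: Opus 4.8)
The plan is to extract everything from the one identity that drives the subject: for any autoequivalence $\Psi$ of $D(X)$ one has $\Psi\compo\Phi_E\compo\Psi^{-1}\cong\Phi_{\Psi(E)}$, which is the functorial content of \cite[Lemma 2.11]{SeidelThomas}. Taking $\Psi=\Phi_F$ gives $\Phi_F\compo\Phi_E\compo\Phi_F^{-1}\cong\Phi_{\Phi_F(E)}$, and substituting the hypothesis $\Phi_E\compo\Phi_F\cong\Phi_F\compo\Phi_E$ collapses the left-hand side to $\Phi_E$. So the commuting hypothesis is precisely the statement
\[\Phi_{\Phi_F(E)}\cong\Phi_E.\]
Everything then rests on two things: recognising when two spherical twists coincide, and reading orthogonality off the twist that realises the comparison.

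The engine I would set up is a dichotomy for a single twist. Write $V=\Hom^\bullet(F,E)$ and recall from Section~\ref{s:except} the natural triangle $\Phi_F(E)\to\Psi_F(E)\to E$, in which $\Psi_F(E)\cong V\otimes F$. Suppose $\Phi_F(E)\cong E[n]$ for some $n$. Since $\Phi_F$ is an equivalence and $\Phi_F(F)\cong F[c]$ for the fixed self-shift $c$ of $F$, comparing $\Hom^\bullet(F,E)\cong\Hom^\bullet(\Phi_F F,\Phi_F E)\cong V[n-c]$ with $V$ itself shows that the bounded graded space $V$ is invariant under the shift by $n-c$. As a nonzero bounded graded vector space admits no nontrivial periodicity, either $V=0$, so that $F\in E^\perp$ by Serre duality and we are done, or $n=c$: the \emph{resonant} case, where $E$ is twisted by exactly the shift $\Phi_F$ applies to $F$. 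It remains to produce the input $\Phi_F(E)\cong E[n]$ and to dispose of resonance. Setting $E''=\Phi_F(E)$ and evaluating $\Phi_E\cong\Phi_{E''}$ on $E''$ gives $\Phi_E(E'')\cong\Phi_{E''}(E'')\cong E''[c]$; this is resonant for the pair $(E,E'')$, since for $d\ge2$ the orthogonal and self shifts of $\Phi_E$ are distinct, so it cannot be the $V=0$ alternative. Thus the resonance lemma below should already return $E''\cong E[m]$, that is $\Phi_F(E)\cong E[m]$; feeding this back into the dichotomy for $(F,E)$ leaves only $V=0$ (orthogonality, as desired) or a further resonance which forces $E\cong F[k]$, whence $\Phi_E\cong\Phi_F$, contradicting the assumed distinctness.

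The whole argument therefore reduces to one resonance lemma: \emph{if $E$ and $F$ are spherical and $\Phi_F(E)\cong E[c]$ with $c$ the self-shift of $F$, then $E\cong F[m]$.} This is the step I expect to be the main obstacle, and it is genuinely categorical rather than numerical. At the level of $K$-theory, commuting reflections yield only that the Euler pairing $\chi(E,F)$ vanishes or that $[E]$ and $[F]$ are proportional; on an odd-dimensional Calabi--Yau the Euler form is skew and even proportionality can fail, so numerics cannot bridge the gap between $\chi(E,F)=0$ and the full vanishing $\Hom^\bullet(E,F)=0$ that we want. Instead I would analyse the evaluation map $V\otimes F\to E$ directly, in the spirit of Mukai's $\Ext$ computations \cite{MukTata}: resonance should force the relevant composition pairings, which Serre duality identifies with the duality pairing between $\Hom^\bullet(F,E)$ and $\Hom^\bullet(E,F)$, to be perfect, so that a nonzero class $\phi\in\Hom^\bullet(F,E)$ and its Serre-dual partner $\psi\in\Hom^\bullet(E,F)$ compose to nonzero scalars in the one-dimensional rings $\Hom^0(F,F)$ and $\Hom^0(E,E)$; simplicity of $E$ and $F$ then promotes $\phi$ to an isomorphism $E\cong F[m]$. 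Carrying out this last upgrade cleanly, without recourse to a stability condition and uniformly in the dimension $d$, is the technical heart of the proof.
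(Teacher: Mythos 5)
Your reduction is clean and its first two-thirds are essentially sound: conjugation $\Phi_F\compo\Phi_E\compo\Phi_F^{-1}\cong\Phi_{\Phi_F(E)}$ turns commutation into $\Phi_{\Phi_F(E)}\cong\Phi_E$, and your ``dichotomy'' (a nonzero bounded graded space $\Hom^\bullet(F,E)$ admits no nontrivial shift-periodicity, so $\Phi_F(E)\cong E[n]$ forces $V=0$ or $n=c$) is a legitimate repackaging of the boundedness argument the paper uses when it applies $\Phi_E^n[nd]$ to a nonzero map $E\to F$. But the entire weight of the proof then lands on your ``resonance lemma'' --- if $\Phi_F(E)\cong E[c]$ with $c$ the self-shift, then $E\cong F[m]$ --- and you have not proved it. That lemma is exactly the conjunction of the paper's Lemma \ref{l:convploog} (forward direction: $\Phi_F(G)\cong G[-d]$ implies $G\in\langle F\rangle$) with the remark that a spherical object of $\langle F\rangle$ is a shift of $F$; the paper establishes it by an induction on $d_F(G)=\sum_i\dim\Hom(F,G[i])$, peeling off one map $F\to G[i]$ at a time via its cone and using the natural triangle $\Phi_F(G)\to\Psi_F(G)\to G$. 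Nothing in your write-up substitutes for that induction, so as it stands the proof is incomplete at precisely the point you yourself flag as the technical heart.

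Moreover, the route you sketch for the resonance lemma would not work as described. Serre duality makes the pairing $\Hom(F,E[i])\times\Hom(E[i],F[d])\to\Hom(F,F[d])\cong\C$ perfect for \emph{any} pair of objects, resonant or not, so producing a $\psi$ with $\psi\compo\phi\neq 0$ costs nothing and yields nothing: that composite lives in $\Ext^d(F,F)$, the trace space, not in the degree-zero ring $\Hom(F,F)$, and a nonzero class in $\Ext^d(F,F)$ does not split $\phi$ or promote it to an isomorphism. What you actually need is a nonzero composite in $\Hom^0$, i.e.\ nondegeneracy of the degree-zero composition pairing between $\Hom(F,E[i])$ and $\Hom(E[i],F)$ --- and that is false in general (it is exactly what distinguishes $E\cong F[m]$ from $E$ merely being a nontrivial iterated extension of shifts of $F$, which is the generic output of the resonance hypothesis for non-spherical $E$). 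Extracting the isomorphism requires using sphericity of $E$ a second time, as the paper does via the count $d_F(E)=2$ obtained by applying $\Hom(E,-)$ to the triangle $E[-d]\to\Psi_F(E)\to E$. I would advise either proving the resonance lemma by the paper's induction or finding a genuinely different argument; the Serre-duality shortcut is a dead end.
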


Before proving this we prove a technical lemma first proposed by David
Ploog in his thesis (\cite[Question 1.23]{PloogThesis}). We let
$\langle E\rangle$ denote the smallest triangulated category
containing $E$ in $D(X)$. This means that each object has a filtration
whose factors are all shifts of isomorphic copies of $E$.
\begin{lemma}\label{l:convploog}
Suppose $E$ is a spherical object of $D(X)$ and $d=\dim X\geq 2$. Then, for any object
$G\in D(X)$, $\Phi_E(G)=G[-d]$ if and only if $G\in\langle E\rangle$.
\end{lemma}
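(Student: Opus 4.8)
The two implications go in opposite directions and I would prove them by quite different means, so let me treat them separately. The direction $G\in\langle E\rangle\Rightarrow\Phi_E(G)\cong G[-d]$ is the more straightforward one. Its seed is the computation $\Phi_E(E)\cong E[-d]$: feeding $\Psi_E(E)=\Hom^\bullet(E,E)\tensor{}E\cong E\oplus E[-d]$ and the evaluation map $(\id,0)$ into the defining triangle $\Phi_E(E)\to\Psi_E(E)\to E$ identifies the fibre with $E[-d]$. Since $\Phi_E$ is exact and commutes with shifts it sends every $E[k]$ to $E[k-d]$, and since every object of $\langle E\rangle$ is a finite iterated extension of shifts of $E$ one wants to propagate this along the filtration. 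I would do this by observing that $\Phi_E$ and $[-d]$ both restrict to exact autoequivalences of $\langle E\rangle$ agreeing on the generator, so that $[d]\compo\Phi_E$ restricts to an autoequivalence of $\langle E\rangle$ fixing $E$; using the standard description of $\langle E\rangle$ as perfect modules over the (formal) graded ring $\Hom^\bullet(E,E)\cong\C[\sigma]/(\sigma^2)$ with $|\sigma|=d$, such an autoequivalence can only rescale $\sigma\mapsto\lambda\sigma$ and is therefore the identity on isomorphism classes. The genuine subtlety here is compatibility with the connecting maps of the filtration, which a naive object-by-object induction does not supply.

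The converse $\Phi_E(G)\cong G[-d]\Rightarrow G\in\langle E\rangle$ is the substantial direction. Substituting the hypothesis into the defining triangle yields
\[G[-d]\lra\Psi_E(G)\lra G,\]
where $\Psi_E(G)=\Hom^\bullet(E,G)\tensor{}E$ is a finite direct sum of shifts of $E$ (finiteness because $G$ is bounded and $E$ is perfect), hence lies in $\langle E\rangle$. Rotating, $\Psi_E(G)\to G\to G[1-d]$ shows that modulo $\langle E\rangle$ the object $G$ is isomorphic to the negative shift $G[1-d]$ of itself.

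To upgrade this periodicity to membership I would iterate. Shifting the triangle by $[-kd]$ and splicing consecutive triangles with the octahedral axiom produces, for every $k$, a triangle
\[G[r_k]\lra C_k\lra G\stackrel{w_k}{\lra}G[r_k+1]\]
with $C_k\in\langle E\rangle$ and $r_k=(k-1)-kd$, so $r_k\to-\infty$ as $k\to\infty$; each splice lowers the shift by $d-1\geq1$, which is exactly where the hypothesis $d\geq2$ is used. Now boundedness finishes the argument: if $G$ has cohomology in degrees $[a,b]$ then as soon as $r_k<a-b-1$ the object $G[r_k+1]$ is concentrated in degrees strictly above $b$, so the $t$-structure vanishing $\Hom(G,G[r_k+1])=0$ forces $w_k=0$. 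The triangle then splits, exhibiting $G$ as a direct summand of $C_k\in\langle E\rangle$; taking $\langle E\rangle$ closed under summands (the extension closure of a single spherical object already is, since $\Hom^\bullet(E,E)$ is finite dimensional) we conclude $G\in\langle E\rangle$.

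I expect the main obstacle to be precisely the passage from the one-step relation ``$G$ is a shift of itself modulo $\langle E\rangle$'' to honest membership: a direct induction is circular, because the remainder is again a shift of $G$. The device that breaks the circularity is the octahedral bootstrap, which drives the remainder shift $r_k$ to $-\infty$, together with the vanishing $\Hom(G,G[m])=0$ for $m\ll0$, which makes the residual triangle split and deposits $G$ as a summand of an object of $\langle E\rangle$.
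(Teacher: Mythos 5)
Your octahedral bootstrap for the implication $\Phi_E(G)\cong G[-d]\Rightarrow G\in\langle E\rangle$ is a genuinely different route from the paper's, which instead inducts on $d_E(G)=\sum_i\dim\Hom(E,G[i])$, excludes $d_E(G)=1$ by a Serre-duality trick, and lowers $d_E$ by $2$ at each step by passing to the cone of a map $E\to G[i]$. The iteration driving the shift $r_k\to-\infty$ and the vanishing of $w_k$ by boundedness are fine as far as they go. The genuine gap is the very last step: what you have actually proved is that $G$ is a \emph{direct summand} of an object of $\langle E\rangle$, and the paper's $\langle E\rangle$ is explicitly the extension closure of the shifts of $E$ (objects admitting a filtration with factors shifts of $E$), not the thick closure. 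Your justification that this extension closure is closed under summands ``since $\Hom^\bullet(E,E)$ is finite dimensional'' does not hold up: already in $D^b(\operatorname{Spec}\C)$ the object $A=\C\oplus\C[1]$ has finite-dimensional graded endomorphisms, yet its summand $\C$ is not an iterated extension of shifts of $A$ (every such extension has class $0$ in $K_0$, while $[\C]=1$). Idempotent-completeness of $\langle E\rangle$ for $E$ spherical is in fact true, but it amounts to the classification of objects of the thick subcategory generated by a spherical object (every indecomposable is an iterated self-extension of a shift of $E$, as in Keller--Yang--Zhou), a result of weight comparable to the lemma itself; you would need to prove or cite it, and as written the argument is incomplete precisely at the point where membership in $\langle E\rangle$ is supposed to be established.

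Two further remarks. First, the paper's induction also delivers the refinement used immediately afterwards, in the Remark and in the proof of Theorem \ref{t:commute}: that $d_E(G)/2$ is the length of a filtration of $G\in\langle E\rangle$ by shifts of $E$, so that $d_E(G)=2$ forces $G\cong E[i]$. Your argument, even once the thickness gap is repaired, does not produce this and it would have to be supplied separately. Second, for the easy implication you correctly identify the compatibility-with-connecting-maps issue in a naive induction on the filtration, but the resolution you sketch (formality of the derived endomorphism algebra, Morita theory for the restricted autoequivalence, and the unproved claim that an autoequivalence fixing the generator acts trivially on isomorphism classes) is considerably more machinery than the paper invokes --- it simply quotes Ploog's Lemma 1.22 --- and is itself only a sketch.
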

\begin{proof}
Recall that $G\in E^\perp$ if and only if $\Phi_E(G)=G$
(see \cite[Lemma 1.22]{PloogThesis}).
The reverse implication of our lemma was also proved in \cite[Lemma 1.22]{PloogThesis}. So suppose
$\Phi_E(G)=G[-d]$. Define
\[d_E(G)=\sum_{i=-\infty}^{\infty}\dim\Hom(E,G[i]).\]
We induct on $d_E(G)$. If $d_E(G)=0$ then $G\in E^\perp$ and
so $\Phi_E(G)=G$ and hence $G=0$. If $d_E(G)=1$ (wlog $\Hom(E,G)\neq0$) then $G[-d]$ fits in a
triangle
\[G[-d]\buildrel f^\vee\over\longrightarrow E\buildrel f\over\longrightarrow G,\]
where the unique maps (up to scale) are Serre dual to each other. But
then $f\compo f^\vee:G[-d]\to G$ must be Serre dual to the identity
$G\to G$ and so cannot vanish. But $f\compo f^\vee=0$ as the composite
of two consecutive maps of a triangle must always vanish. The
contradiction shows that $d_E(G)$ cannot equal $1$. Now assume that for
all $n<d_E(G)$ we know that if $d_E(G')=n$ and $\Phi_E(G')=G'[-d]$ then
$G'\in\langle E\rangle$. Pick any $f\in\bigoplus\Hom(E,G[i])$ and again
without loss of generality assume $i=0$. Let $C$ be a cone on $f:E\to
G$. Then $\Phi_E(C)=C[-d]$ because $\Phi_E(f)=f[-d]$. But we also
have that $d_E(C)=d_E(G)-2$ by applying $\Hom(E,-)$ to the triangle
defining $C$ and because $\dim X>1$. Then by induction $d_E(G)$ must be
even and $C\in\langle E\rangle$. Hence, $G\in\langle E\rangle$ as it
is an extension of $C$ by $E$.
\end{proof}
\begin{remark}
We can extract a bit more from the proof by observing
that it shows that if $G\in\langle E\rangle$ has $d_E(G)=2$ then $G\cong E[i]$ for
some integer $i$. In fact, we can go further to observe that
$d_E(G)/2$ is the length of a filtration of $G\in\langle E\rangle$
with factors given by shifts of $E$ (always under the
assumption that $d>1$). It follows that the length of such a
filtration is well defined as a function of $G$.
 
We shall use this in the following way: if $F\in\langle E\rangle$ is
spherical then applying
$F[i]\to$ to the triangle $F[-d]\to \Psi_E(F)\to F$ implies that
$d_E(F)=2$ and so $F\cong E[i]$ for some integer $i$. 
\end{remark}
\begin{lemma}\label{l:swap}
Suppose $E$ and $F$ are two spherical objects such that $\Phi_E$ and
$\Phi_F$ commute. Then $G\in\langle E\rangle$ if and only if
$\Phi_F(G)\in\langle E\rangle$.
\end{lemma}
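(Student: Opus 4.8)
The plan is to reduce everything to the functional characterisation of $\langle E\rangle$ supplied by Lemma~\ref{l:convploog}: an object lies in $\langle E\rangle$ precisely when $\Phi_E$ sends it to its own $(-d)$-shift. With this reformulation the statement becomes a purely formal consequence of the commutativity hypothesis together with the fact that $\Phi_F$ is an exact equivalence, so I expect no serious obstacle---the real content has already been absorbed into the preceding lemma, and what remains is to package the bookkeeping correctly.

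First I would note that, being a Fourier-Mukai equivalence of triangulated categories, $\Phi_F$ is exact and hence commutes with the shift functor, so $\Phi_F(H[-d])\cong\Phi_F(H)[-d]$ for every object $H$. For the forward implication, suppose $G\in\langle E\rangle$, so that $\Phi_E(G)\cong G[-d]$ by Lemma~\ref{l:convploog}. Applying $\Phi_F$ and invoking commutativity gives
\[
\Phi_E(\Phi_F(G))\cong\Phi_F(\Phi_E(G))\cong\Phi_F(G[-d])\cong\Phi_F(G)[-d].
\]
By Lemma~\ref{l:convploog} applied now to the object $\Phi_F(G)$, this says exactly that $\Phi_F(G)\in\langle E\rangle$.

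For the converse, suppose $\Phi_F(G)\in\langle E\rangle$, so that $\Phi_E(\Phi_F(G))\cong\Phi_F(G)[-d]$. Commutativity rewrites the left-hand side as $\Phi_F(\Phi_E(G))$ and shift-compatibility rewrites the right-hand side as $\Phi_F(G[-d])$, yielding $\Phi_F(\Phi_E(G))\cong\Phi_F(G[-d])$. The one step that uses more than formal manipulation is the cancellation that follows: because $\Phi_F$ is an equivalence it is injective on isomorphism classes of objects, so we may strip it off to conclude $\Phi_E(G)\cong G[-d]$, and hence $G\in\langle E\rangle$ by Lemma~\ref{l:convploog}. Since spherical twists are equivalences by construction this cancellation is legitimate, and the argument is complete. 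The only point that would need a line of justification in the written-out proof is the exactness (shift-compatibility) of $\Phi_F$, which is automatic for a Fourier-Mukai equivalence.
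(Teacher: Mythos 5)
Your proof is correct and follows essentially the same route as the paper: the forward direction is verbatim the paper's computation, and your converse (cancelling the equivalence $\Phi_F$ from $\Phi_F(\Phi_E(G))\cong\Phi_F(G[-d])$ using that a fully faithful functor is injective on isomorphism classes) is just a trivial repackaging of the paper's step of applying the forward implication to $\Phi_F^{-1}$.
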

\begin{proof}
For any $G\in\langle E\rangle$ we have
\[\Phi_E(\Phi_F(G))\cong\Phi_F(\Phi_E(G))\cong\Phi_F(G[-d]).\]
So $\Phi_F(G)\in\langle E\rangle$ by Lemma \ref{l:convploog}. Applying this to
$G=\Phi^{-1}_F(G')$ gives us the converse as well.
\end{proof}

\begin{proof}[Proof of Theorem \ref{t:commute}.] 
Assume that that $\Phi_E$ and $\Phi_F$ commute and suppose that $E$
and $F$ are not orthogonal.
Then $\Phi_E(F)\in\langle F\rangle$ by Lemma \ref{l:swap}. But
$\Phi_E(F)$ is spherical and so by the remark above, $\Phi_E(F)=F[i]$
for some $i$. By assumption, we have
a non-zero map $E\to F$ (replacing $F$ by a suitable shift if necessary).
Applying the composite functor $\Phi^n_E[nd]$, for any positive integer
$n$ to this gives a non-zero map
$E\to F[n(i+d)]$. But $D(X)$ has bounded cohomology and
exts and so $i=-d$. 


So $\Phi_E(F)\in\langle E\rangle$ by Lemma \ref{l:convploog} again. Then
$F\in\langle E\rangle$ by Lemma \ref{l:swap}. By the remark,
$F=E[i]$ for some $i$ and that implies that $\Phi_E=\Phi_F$
contradicting our assumption.
\end{proof}

\section{Spherical Bundles}
We shall now restrict our attention to the case of spherical
bundles on complex Calabi-Yau $d$-folds. We shall see that this case can be tackled more directly in
the spirit of Mukai's paper. 

We first assume that $E$ is a simple rigid bundle and
consider the double
exact complex associated to the bi-functor $\Ext^*(-,-)$ applied to
the short exact sequence\footnote{The
reader is urged to write
a large part of this double complex out on a large piece of paper
before proceeding!}
\begin{center}$0\to E_x\to E_H\to\OO_x\to0.$
\end{center}
Using the fact that $\Ext^i(E_H,\OO_x)=0$ for all $i>0$ and
$\Ext^i(\OO_x,E_H)$ vanishes for all $i<d$, we have
$\dim\Ext^1(\OO_x,F)=1$, $\dim\Ext^1(F,\OO_x)=\rk(E)^2-1+d$,
$\dim\Hom(F,E_H)=\dim\Hom(E_H,E_H)=\rk(E)^2$  and,
crucially, $\Ext^1(F,E_H)=0$ (using the fact that $d>2$ for this: the
case $d=2$ is much simpler and is left to the reader).
From this we have 
\[\dim\Ext^1(F,F)=d-1+\dim\Hom(F,F)\]
Since $\Ext^1(E_H,\OO_x)=0$, we have that the map
\[\Ext^2(\OO_x,F)\to\Ext^2(E,F)\]
vanishes and so
$\Ext^2(F,F)\to\Ext^2(\OO_x,F)$ surjects.
The map \[\Hom(F,F)\to\Ext^1(\OO_x,F)\cong\C\] is the boundary map and
must be non-zero as the identity map is contained in the
domain. Hence, this map also surjects. We can conclude
\[\dim\Hom(F,F)=\dim\Hom(E_H,F)+1\]
The following result is a stronger version of \cite{MukTata}, Prop 3.9.
\begin{lemma} The map $\Hom(E_H,E_H)\to\Hom(E_H,\OO_x)$ injects
\end{lemma}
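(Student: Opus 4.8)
The plan is to recognise the displayed map as composition with the evaluation $E_H\to\OO_x$, which I denote $\epsilon$, to place it inside the long exact sequence obtained by applying $\Hom(E_H,-)$ to the defining sequence $0\to E_x\to E_H\to\OO_x\to0$, and then to show that its kernel vanishes. Since $E_x\to E_H$ is a monomorphism, the induced map $\Hom(E_H,E_x)\to\Hom(E_H,E_H)$ is injective with image exactly the kernel of $\Hom(E_H,E_H)\to\Hom(E_H,\OO_x)$. Thus it suffices to prove that $\Hom(E_H,E_x)=0$.

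To do this I would first strip off the vector space. Writing $V=\Hom(E,\OO_x)$, which has dimension $\rk(E)$ because $E$ is locally free, we have $E_H=E\otimes V$ and hence $\Hom(E_H,E_x)=\Hom(E,E_x)\otimes V^\vee$. So the whole lemma reduces to the single vanishing statement $\Hom(E,E_x)=0$. I would obtain this from the long exact sequence produced by applying $\Hom(E,-)$ to $0\to E_x\to E_H\to\OO_x\to0$, in which $\Hom(E,E_x)$ appears as the kernel of the map $\Hom(E,E_H)\to\Hom(E,\OO_x)$ induced by $\epsilon$.

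It then remains to identify this last map, and here the key input is simplicity: since $\Hom(E,E)=\C$, the space $\Hom(E,E_H)=\Hom(E,E)\otimes V$ is canonically $V$, with $v\in V$ corresponding to $\id_E\otimes v:E\to E\otimes V$. Composing with $\epsilon$ sends this to $e\mapsto\epsilon(e\otimes v)$, which returns $v$ itself, now viewed inside $\Hom(E,\OO_x)=V$; that is, the map $\Hom(E,E_H)\to\Hom(E,\OO_x)$ is the identity of $V$. In particular it is injective, so $\Hom(E,E_x)=0$ and the lemma follows. (The same bookkeeping shows the original map is in fact an isomorphism of $\rk(E)^2$-dimensional spaces, but only injectivity is needed downstream.)

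The step I expect to carry the real weight is the last one: checking that the natural identifications are genuinely compatible with $\epsilon$, so that the induced map is the \emph{identity} and not merely some map between spaces of the right size. This is precisely where simplicity is indispensable, since for $\dim\Hom(E,E)>1$ the source $\Hom(E,E_H)$ would be strictly larger than the target $\Hom(E,\OO_x)$ and injectivity would fail. This is also why the statement is stronger than \cite{MukTata}, Prop 3.9: the argument uses only that $E$ is a simple locally-free sheaf, with no stability hypothesis and no restriction to surfaces, and the direct computation above sidesteps the full double complex entirely.
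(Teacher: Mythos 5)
Your proof is correct and is essentially the paper's argument: both use simplicity to identify $\Hom(E_H,E_H)$ with $\End(V)$ for $V=\Hom(E,\OO_x)$ and then observe that any nonzero element of $V$ is a nonzero map $E\to\OO_x$, so post-composition with the evaluation cannot annihilate a nonzero endomorphism. Your routing through left-exactness and the vanishing $\Hom(E,E_x)=0$ is just a tidier bookkeeping of the same computation.
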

\begin{proof}
Consider a map $f:E_H\to E_H$. If we fix a basis for $\Hom(E,\OO_x)$,
then $f$ is given by an $r\times r$ matrix with scalar entries (since
$E$ is simple). The image of $f$ is given by a subspace $V$ of
$\Hom(E,\OO_x)$ and $f$ is zero if and only if this subspace is
zero. But if it is not zero then the image of $E\otimes V$ in $\OO_x$
is non-zero and so the image of $f$ in $\Hom(E_H,\OO_x)$ is also
non-zero.
\end{proof}
We deduce that $\Hom(E_H,F)=0$ and hence $\dim\Hom(F,F)=1$.
Now we can conclude that $\dim\Ext^1(F,F)=d$. 

Next we consider two distinct points $x$ and $y$ of $X$ and the two
associated kernels $F_x$ and $F_y$. Since $\Ext^i(\OO_x,\OO_y)=0$ for
all $i$ and $F_y$ is locally-free away from $x$ we can conclude from
the double exact sequence associated to the two sequences for $F_x$
and $F_y$, that $\Hom(F_x,F_y)\cong\Hom(E_H,F_y)=0$ and
$\Ext^1(F_x,F_y)\cong\Ext^1(E_H,F_y)$ which is also zero.

The following generalises Corollary 2.12 of \cite{MukTata}.
\begin{prop}\label{p:exti}
If $E$ is a simple rigid vector bundle and $d>3$ then there are
natural isomorphisms
\[\Ext^i(F_x,F_y)\cong\Ext^i(\OO_x,\OO_y)\oplus\Ext^i(E_H,E_H)\]
for all $x,y\in X$ (not necessarily distinct) and $1<i<d-1$.
\end{prop}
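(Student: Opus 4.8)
The plan is to compute $\Ext^i(E_x,E_y)$ directly from the two defining sequences $0\to E_x\to E_H\to\OO_x\to0$ and $0\to E_y\to E_H\to\OO_y\to0$ by feeding them into the bifunctor $\rhom(-,-)$, that is, via the same ``double exact complex'' already used above. Since $F_x$ and $F_y$ are a common fixed shift of $E_x$ and $E_y$, these shifts cancel and $\Ext^i(F_x,F_y)\cong\Ext^i(E_x,E_y)$, so it suffices to treat the latter. The two inputs I would rely on throughout are the vanishings already in play: $\Ext^j(E_H,\OO_y)=0$ for $j>0$ since $E_H$ is locally free, and $\Ext^j(\OO_x,E_H)=0$ for $j<d$ by Serre duality on $X$.

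First I would resolve the left-hand variable. Applying $\rhom(-,E_H)$ to the $E_x$-sequence and using $\Ext^j(\OO_x,E_H)=0$ for $j<d$ gives a natural isomorphism $\Ext^i(E_x,E_H)\cong\Ext^i(E_H,E_H)$ for $i\le d-2$, while applying $\rhom(-,\OO_y)$ and using $\Ext^j(E_H,\OO_y)=0$ for $j>0$ gives $\Ext^i(E_x,\OO_y)\cong\Ext^{i+1}(\OO_x,\OO_y)$ for $i\ge1$. Resolving the right-hand variable, i.e.\ applying $\rhom(E_x,-)$ to the $E_y$-sequence, then produces the long exact sequence linking $\Ext^\bullet(E_x,E_y)$, $\Ext^\bullet(E_x,E_H)$ and $\Ext^\bullet(E_x,\OO_y)$.

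The crux, and the step I expect to be the main obstacle, is the vanishing of the maps $\Ext^i(E_x,E_H)\to\Ext^i(E_x,\OO_y)$ induced by $E_H\to\OO_y$, for $1\le i\le d-2$. I would deduce this from the same local freeness: precomposing with the surjection $\Ext^i(E_H,E_H)\to\Ext^i(E_x,E_H)$ coming from $E_x\hookrightarrow E_H$, the resulting map factors through $\Ext^i(E_H,\OO_y)$, which vanishes for $i>0$; since that surjection is onto, the original map is forced to be zero. Granting this, the long exact sequence breaks up and reduces, for $1<i<d-1$, to the natural short exact sequence $0\to\Ext^i(\OO_x,\OO_y)\to\Ext^i(E_x,E_y)\to\Ext^i(E_H,E_H)\to0$, where the sub is the image of the connecting map on $\Ext^{i-1}(E_x,\OO_y)\cong\Ext^i(\OO_x,\OO_y)$ and the quotient is restriction to $\Ext^i(E_x,E_H)\cong\Ext^i(E_H,E_H)$. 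The hypothesis $d>3$ enters only to make this middle range nonempty.

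It remains to split this sequence to obtain the claimed direct sum. The quotient map onto $\Ext^i(E_H,E_H)$ is natural, being induced by $E_y\hookrightarrow E_H$ together with the restriction isomorphism; to produce a natural section I would Serre-dualise the analogous natural surjection for the pair $(y,x)$ in the complementary degree $d-i$, which yields a natural map $\Ext^i(E_H,E_H)\to\Ext^i(E_x,E_y)$, and then check that its composite with the quotient is an isomorphism. Verifying the compatibility of Serre duality with the restriction and evaluation maps on the two sides is the delicate bookkeeping point, and it is also what makes the resulting decomposition natural and symmetric under exchanging $x\leftrightarrow y$ and $i\leftrightarrow d-i$.
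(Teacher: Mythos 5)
Your proposal is correct and follows the same strategy as the paper: run the bifunctor over the two defining short exact sequences, kill the cross maps $\Ext^i(F_x,E_H)\to\Ext^i(F_x,\OO_y)$ to extract the short exact sequence $0\to\Ext^i(\OO_x,\OO_y)\to\Ext^i(F_x,F_y)\to\Ext^i(E_H,E_H)\to0$ in the range $1<i<d-1$, and then split it naturally. Two sub-steps are handled differently. For the vanishing of the cross map you precompose with the surjection from $\Ext^i(E_H,E_H)$ and factor through $\Ext^i(E_H,\OO_y)=0$; the paper instead postcomposes with the boundary isomorphism onto $\Ext^{i+1}(\OO_x,\OO_y)$ and factors through $\Ext^{i+1}(\OO_x,E_H)=0$. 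These are the two ways around the same commuting square of the double complex and are equally valid. For the splitting, you Serre-dualise the surjection for the pair $(y,x)$ in degree $d-i$; this does work (the composite with the quotient is, after dualising back, the map $\Ext^{d-i}(E_H,F_x)\to\Ext^{d-i}(F_y,F_x)\to\Ext^{d-i}(F_y,E_H)$, which the commuting square identifies with a composite of two isomorphisms), but it forces you into exactly the duality bookkeeping you flag as delicate. The paper's section is simpler and avoids duality altogether: the natural map $\Ext^i(E_H,F_y)\to\Ext^i(F_x,F_y)$ given by restriction along $F_x\hookrightarrow E_H$ is already a section, since its composite with the quotient equals, by the same commuting square, the composite of the two isomorphisms $\Ext^i(E_H,F_y)\cong\Ext^i(E_H,E_H)\cong\Ext^i(F_x,E_H)$. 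So your argument is complete as sketched, but the direct section would shorten the last step and make naturality manifest without invoking Serre duality.
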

\begin{proof}
The proof uses the double exact sequence
we considered above. Start at $i=2$ and observe that
$\Ext^n(E_H,F_y)\cong\Ext^n(E_H,E_H)$ for $1\leq n<d$ (the case
$n=1$ follows because $E$ is rigid) and there is a
natural injection of $\Ext^n(E_H,E_H)$ into $\Ext^n(F_x,E_H)$. We also
have $\Ext^n(F_x,\OO_y)\cong\Ext^{n+1}(\OO_x,\OO_y)$ and so the map
$g:\Ext^n(F_x,E_H)\to\Ext^n(F_x,\OO_y)$ is given by the composite 
\[\Ext^n(F_x,E_H)\to\Ext^{n+1}(\OO_x,E_H)\to\Ext^{n+1}(\OO_x,\OO_y)\to\Ext^n(F_x,E_H),\]
But $\Ext^{n+1}(\OO_x,E_H)=0$ and so
the composite vanishes for $n=1,\dots,d-1$. Moreover, the surjection
$\Ext^n(F_x,F_y)\to\Ext^n(F_x,E_H)$ splits naturally since the image
is \[\Ext^n(E_H,E_H)\cong\Ext^n(E_H,F_y)\] and the image of this in
$\Ext^n(F_x,F_y)\to\Ext^n(E_H,E_H)$ is the identity.
\end{proof}

This shows that $\{F_y\}$ is a strongly simple family. Using Theorem
\ref{t:TB}, we have an alternative proof of
\begin{thm}[\cite{SeidelThomas}, \cite{PloogThesis}] If $E$ is an spherical bundle on a Calabi-Yau $d$-fold
$X$ then the moduli space of sheaves $\{F_x\}$ constructed above is
naturally isomorphic to $X$ and gives rise to a non-trivial Fourier-Mukai
transform $D(X)\to D(X)$.
\end{thm}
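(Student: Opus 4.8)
The plan is to apply Bridgeland's criterion (Theorem~\ref{t:TB}) to the family $\{F_x\}_{x\in X}$, and then to extract the moduli statement and the non-triviality from it. Because $X$ is Calabi--Yau the condition $F_x\otimes K_X\cong F_x$ holds trivially, so everything comes down to checking that $\{F_x\}$ is strongly simple: each $F_x$ is simple, and $\Ext^i(F_x,F_y)=0$ for all $i$ whenever $x\neq y$. A spherical bundle is in particular simple and rigid, so all of the computations preceding Proposition~\ref{p:exti} are available.

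Simplicity was already recorded above, since $\dim\Hom(F_x,F_x)=1$. For the orthogonality I would assemble the vanishing degree by degree. The cases $i=0$ and $i=1$ were established directly, giving $\Hom(F_x,F_y)\cong\Hom(E_H,F_y)=0$ and $\Ext^1(F_x,F_y)\cong\Ext^1(E_H,F_y)=0$ for $x\neq y$. For the middle range $1<i<d-1$ I would quote Proposition~\ref{p:exti}: the summand $\Ext^i(\OO_x,\OO_y)$ vanishes since $x\neq y$, and the summand $\Ext^i(E_H,E_H)$ vanishes because $E_H$ is a direct sum of copies of $E$ and sphericity of $E$ forces $\Ext^i(E,E)=0$ for $0<i<d$. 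The two remaining top degrees follow from Serre duality on the Calabi--Yau $d$-fold, $\Ext^d(F_x,F_y)\cong\Hom(F_y,F_x)^{*}=0$ and $\Ext^{d-1}(F_x,F_y)\cong\Ext^1(F_y,F_x)^{*}=0$, using the $i=0,1$ vanishing with $x$ and $y$ interchanged. (For $d\leq 3$ the middle range is empty, so only these boundary degrees occur and Proposition~\ref{p:exti} is not needed; the remaining low-dimensional self-$\Ext$ computations are the simpler ones indicated in the text.)

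Strong simplicity now lets Theorem~\ref{t:TB} produce a Fourier--Mukai transform $D(X)\to D(X)$ with kernel the sheaf $F$ whose fibres are the $F_x$; by the paper's own conventions this transform is the spherical twist $\Phi_E$. For the moduli statement I would observe that $x\mapsto F_x$ is a flat family over $X$ of pairwise orthogonal (hence pairwise non-isomorphic) simple sheaves, giving an injection of $X$ into the moduli space $\M$; since $\dim\Ext^1(F_x,F_x)=d=\dim X$ the Zariski tangent spaces agree, so this is an open immersion onto a smooth $d$-dimensional locus, and properness of $X$ upgrades it to the asserted natural isomorphism $\M\cong X$. Finally, non-triviality is immediate: $E\in\langle E\rangle$, so Lemma~\ref{l:convploog} gives $\Phi_E(E)\cong E[-d]\not\cong E$ for $d\geq 1$, whence $\Phi_E\not\cong\Id$.

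I expect the one genuinely delicate point to be the last, scheme-theoretic half of the moduli identification rather than the existence of the transform. The $\Ext$-vanishing that feeds Bridgeland's theorem is essentially forced by the computations already carried out together with Serre duality, and non-triviality is a one-line consequence of Lemma~\ref{l:convploog}; by contrast, promoting the set-theoretic bijection $x\mapsto[F_x]$ to an isomorphism of moduli spaces is where the flatness of the family and the tangent-space comparison $\dim\Ext^1(F_x,F_x)=\dim X$ really have to be invoked.
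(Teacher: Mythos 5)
Your proposal is correct and follows essentially the same route as the paper: the preceding $\Ext$-computations (simplicity of $F_x$, the degree $0$ and $1$ vanishing for $x\neq y$, and Proposition~\ref{p:exti} for the middle range) establish strong simplicity, and Theorem~\ref{t:TB} then produces the transform. You merely make explicit a few points the paper leaves implicit, namely the Serre-duality argument for the top degrees, the tangent-space identification of the moduli space with $X$, and the non-triviality via Lemma~\ref{l:convploog}.
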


\section{Recovering the Strongly Spherical Collection}
\label{s:recover}
We shall now consider the reverse process: given a Fourier-Mukai
transform determined by a family of non-locally-free torsion-free
sheaves $\{F_y\}$ with dimension 0 singularity sets, can we
find a strongly spherical collection of bundles
$\Gamma=\{E_i\}_{i=0}^n$ such that $F_x$ is the kernel of the
canonical map 
$\bigoplus_{i=0}^nE_i\otimes\Hom(E_i,\OO_x)\to \OO_x$? We shall see that this is indeed
possible. The first observation we need to make is 
that the parameter space $\{F_y\}$ is naturally (isomorphic to) $X$. This is immediate
since the map $F_y\to F_y^{**}$ has quotient $\OO_T$ and we see that
the parameter space $Y$ sits inside a space of kernels
$F_y^{**}\to\OO_T$ as $T$ varies in $\Hilb^{|T|}(X)$.
Since the moduli space must be
complete we see that the map $Y\to X$ given by the singularity of
$F_y$ is an isomorphism. We also see that $F_y^{**}=F_{y'}^{**}$ for
any pair $y$ and $y'$. We shall write $F$ for $F_y^{**}$. Since $F$ is
locally-free away from $x$ and from $y$ we see that $F$ is
locally-free over the whole of $X$. Without loss of generality we
assume in what follows that the isomorphism $Y\cong X$ is the identity.

Using the double exact sequence from the previous section we can
immediately conclude that
$\dim\Hom(F,F)=\rk(F)$ and $\Hom(F,F)\cong\Hom(F,\OO_x),$
for any $x\in X$. We can also conclude that $\Ext^i(F,F)=0$ for $i=1,\dots,d-1$.
If $\rk(F)=1$ then $F$ must be exceptional. Assume now that
$\rk(F)>1$. We observe also that the kernels of a suitable family of
maps $\lambda_x:F\to\OO_x$, as $x$ 
varies, generate the family $\{F_x\}$. Since $\dim\Hom(F,F)>1$ we can
find an endomorphism of $F$ which has rank less than $r$ and so we have a
sheaf $P$ which factors such an endomorphism. We can assume $P$ is
reflexive by factoring the torsion out of $F/P=Q$, say. We now consider the
double exact sequences associated to pairs of short exact sequences
taken from 
\[\begin{split}0\to F_x\to &\;F\to\OO_x\to 0,\\ 
0\to P\to &\; F\to Q\to 0 \text{\qquad and}\\
0\to K\to &\;F\to P\to 0.
\end{split}
\]
From these it follows that $\Hom(P,F_x)=0$ and
$\Hom(Q,F_x)=0$. It follows from this that $\Ext^1(Q,F_x)=0$ and, crucially,
$\Hom(Q,F)=\Hom(Q,\OO_x)$ and $\Hom(P,F)=\Hom(P,\OO_x)$. These imply
that both $P$ and $Q$ are locally-free. 

We now appeal to the following useful technical result (true in much
greater generality for suitable objects in any noetherian abelian category).
\begin{lemma} If $E$ is a torsion-free sheaf which is not simple then
there exists a simple sheaf $G$ (not necessarily unique) and an injection
$\alpha:G\hookrightarrow E$ and a surjection $\beta:E\twoheadrightarrow G$ such
that either $\beta\alpha$ is zero or the identity. Moreover, if $G\to
E$ is any non-zero map then it must inject.
\end{lemma}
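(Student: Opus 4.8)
The plan is to realise $G$ as the image of an endomorphism of $E$ of smallest possible rank. First I would observe that $A=\End(E)$ is a finite-dimensional $\C$-algebra, since $X$ is projective, and that because $E$ is torsion-free every nonzero $\phi\in A$ has image $\operatorname{im}\phi$ torsion-free of strictly positive rank: a rank-zero image would be a torsion subsheaf of the torsion-free $E$, hence zero. Thus the set of ranks of nonzero endomorphisms is a nonempty set of positive integers, and I would fix some $\phi\neq0$ realising its minimum $m$. Factoring $\phi$ through its image gives simultaneously a surjection $\beta:E\twoheadrightarrow G$ and an injection $\alpha:G\hookrightarrow E$ with $\phi=\alpha\beta$ and $G=\operatorname{im}\phi$ torsion-free of rank $m$; note in passing that $m<\rk(E)$ exactly because $E$ is not simple, for otherwise every nonzero endomorphism would be generically, hence (by torsion-freeness) actually, injective and so invertible in $A$, making $A$ a division algebra.

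The key step is to show that this $G$ is simple. Given any $\rho\in\End(G)$, the endomorphism $\alpha\rho\beta\in A$ has image $\alpha(\operatorname{im}\rho)$, because $\beta$ is surjective and $\alpha$ injective, and this image has rank $\rk(\operatorname{im}\rho)\leq m$. If $\rho$ were not generically injective this rank would be strictly less than $m$, so minimality of $m$ would force $\alpha\rho\beta=0$, whence $\rho=0$ since $\alpha$ injects and $\beta$ surjects. Therefore every nonzero endomorphism of $G$ is generically injective, hence injective as $G$ is torsion-free. Now left multiplication by such a nonzero $\rho$ is an injective $\C$-linear map on the finite-dimensional algebra $\End(G)$, hence bijective, so $\rho$ is a unit; thus $\End(G)$ is a finite-dimensional division algebra over the algebraically closed field $\C$, i.e. $\End(G)=\C$ and $G$ is simple. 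In particular $\beta\alpha\in\End(G)=\C$, and rescaling $\beta$ when this scalar is nonzero arranges $\beta\alpha\in\{0,\id\}$.

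For the final assertion, let $\gamma:G\to E$ be any nonzero map. Since $\beta$ is surjective, $\gamma\beta\in A$ is nonzero with $\operatorname{im}(\gamma\beta)=\operatorname{im}\gamma$, so minimality gives $\rk(\operatorname{im}\gamma)\geq m=\rk(G)$; as also $\rk(\operatorname{im}\gamma)\leq\rk(G)$, the kernel of $\gamma$ has rank zero and hence vanishes, $G$ being torsion-free, so $\gamma$ injects. I expect the main obstacle to be the passage from ``every nonzero endomorphism of $G$ is injective'' to ``$\End(G)=\C$'': this is the one place where both the finite-dimensionality of $\End(G)$ and the algebraic closedness of $\C$ are genuinely used, the rest being a bookkeeping of generic ranks resting on the single observation that the image of an endomorphism is at once a sub- and a quotient-object of $E$, with torsion-freeness forbidding any drop of rank into torsion.
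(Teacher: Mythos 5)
Your proof is correct and follows essentially the same route as the paper: both realise $G$ as the image of an endomorphism of $E$ of minimal positive rank (the paper phrases this as a Zorn's-lemma minimal element of the poset of sheaves factoring non-isomorphisms of $E$, ordered by rank, with torsion-freeness forcing the rank to drop strictly under further factorisation). If anything, your treatment of the simplicity of $G$ --- every nonzero endomorphism of $G$ is injective, so $\End(G)$ is a finite-dimensional division algebra over $\C$ and hence equals $\C$ --- is more complete than the paper's one-line claim, which tacitly requires a non-scalar endomorphism of $G$ to be non-injective; this is exactly the point where finite-dimensionality and algebraic closedness enter, as you note.
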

\begin{proof}
Since $E$ is not simple, we can consider the set of sheaves $G$ which
factor non-isomorphisms $E\to E$. Such a sheaf $G$ is automatically
torsion-free and gives rise to maps $\alpha$ and $\beta$. The set is
partially ordered by  compositions $E\twoheadrightarrow
G\twoheadrightarrow G'\hookrightarrow E$. Since 
$r(G')<r(G)$ (otherwise the kernel of $G\to G'$ would be a torsion
sheaf), we can pick (using Zorn's Lemma) a minimal element with respect to this order. Call
it $G$. Then $G$ is simple since otherwise we could factor a map $G\to
G$ via $G'$ which would be strictly smaller than $G$ in the order. Now
the composite $\beta\alpha$ is either zero or a multiple of the
identity (in which case we replace $\beta$ with a suitable multiple).

The last statement follows because if such a map is not injective then
the image would be strictly smaller in the order.
\end{proof}

Applying this to our current situation we may assume $P$ is simple and
is minimal with respect to the ordering of the proof above. 
Moreover, any (non-zero) map $P\to F$ must inject. 

We now repeat this construction in a family. Suppose, as in the
previous section, that $\EE$ is the universal sheaf corresponding to
the family $\{F_y\}$ and consider $S=\EE^{**}/\EE$. Since, $F_y$ is
singular only at $y$ we have that $S|_{X\times\{y\}}=\OO_y$ and so
(wlog) $S$ is supported on the diagonal $\Delta\subset X\times X$ and is
locally-free there. If we twist by $\pi_2^*(\pi_{2*}S)^*$ then we may
assume without further loss of generality that $S=\OO_\Delta$.

Observe that $\EE^{**}$ is flat over both
projections and has the property that $\EE^{**}|_{X\times\{y\}}=F$ for
all $y\in X$ and so is locally-free. Observe we have a diagram of
natural transformations of functors 
\[\Phi_\EE\lra\Phi_{\EE^{**}}\lra \Id\]
This diagram has the
property that for any object $G\in D(X)$ there is a distinguished
triangle
\begin{equation}\Phi_\EE G\lra\Phi_{\EE^{**}}G\lra G.\label{eq-tri}\end{equation}
which is natural in $G$. Since $\Phi_\EE F=F[-d]$ we see that
$F\to \Phi_\EE F[1]$ is zero and so
$\Phi_{\EE^{**}}(F^*)\cong F^*\oplus
F^*[-d]$. Hence, $\EE^{**}|_{\{x\}\times X}\cong F^*$. 

\begin{lemma} In the given situation, $\Phi^0_{\EE^{**}}(P^*)\cong
P^*$.
\end{lemma}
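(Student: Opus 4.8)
The plan is to exploit the natural transformations $\Phi_\EE\lra\Phi_{\EE^{**}}\lra\Id$ recorded above. Applied to $G=P^*$ they give a functorial triangle $\Phi_\EE(P^*)\lra\Phi_{\EE^{**}}(P^*)\lra P^*$ and hence, on degree-zero cohomology, a natural map $\eta_{P^*}\colon\Phi^0_{\EE^{**}}(P^*)\to P^*$. I would prove this map is an isomorphism by treating injectivity and surjectivity separately, so that the identification is moreover natural.

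For injectivity, the long exact cohomology sequence of the triangle contains the exact piece $\Phi^0_\EE(P^*)\to\Phi^0_{\EE^{**}}(P^*)\to P^*$, so it suffices to show $\Phi^0_\EE(P^*)=0$. Since $P^*$ is locally free and $\EE$ is flat over $\pi_2$ (it is a family of sheaves over $Y$), we have $\Phi^0_\EE(P^*)=\pi_{2*}(\pi_1^*P^*\otimes\EE)$, and base change identifies the restriction of $\pi_1^*P^*\otimes\EE$ to $X\times\{y\}$ with $P^*\otimes F_y$, whose global sections are $\Hom(P,F_y)$. The vanishing $\Hom(P,F_x)=0$ established above holds uniformly in the singular point, so $\Hom(P,F_y)=0$ for every $y$; a pushforward all of whose fibres have no global sections vanishes, giving $\Phi^0_\EE(P^*)=0$ and hence injectivity of $\eta_{P^*}$.

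For surjectivity I would use naturality of the transformation $\Phi_{\EE^{**}}\lra\Id$ for the dual surjection $\phi\colon F^*\twoheadrightarrow P^*$ arising from $0\to P\to F\to Q\to 0$. First, the triangle for $G=F^*$ together with the splitting $\Phi_{\EE^{**}}(F^*)\cong F^*\oplus F^*[-d]$ recorded above (equivalently $\Phi_\EE(F^*)\cong F^*[-d]$) shows that $\eta_{F^*}\colon\Phi^0_{\EE^{**}}(F^*)\to F^*$ is an isomorphism, since $H^0(F^*[-d])=H^1(F^*[-d])=0$ for $d>1$. The naturality square for $\phi$ then reads $\phi\circ\eta_{F^*}=\eta_{P^*}\circ\Phi^0_{\EE^{**}}(\phi)$ on $H^0$; the left-hand composite is a surjection followed by an isomorphism, hence surjective, so $\eta_{P^*}$ is surjective as well. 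Combining the two halves yields the natural isomorphism $\Phi^0_{\EE^{**}}(P^*)\cong P^*$.

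The main obstacle is the vanishing $\Phi^0_\EE(P^*)=0$: everything rests on knowing $\Hom(P,F_y)=0$ for \emph{all} $y$ rather than at a single point, and on the cohomology-and-base-change bookkeeping that converts fibrewise absence of global sections into the vanishing of the pushforward. By contrast the surjectivity half is comparatively soft, using only the already-established degree-zero behaviour of $\Phi_{\EE^{**}}$ on $F^*$ and the functoriality of $\Phi_{\EE^{**}}\lra\Id$.
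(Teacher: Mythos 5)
Your proof is correct, and the injectivity half coincides with the paper's argument: both reduce the problem to $\Phi^0_\EE(P^*)=0$, obtained from the fibrewise vanishing $\Hom(P,F_y)=0$ via cohomology and base change. Where you diverge is the surjectivity half. The paper also invokes $\Ext^1(P,F_y)=0$ to get $\Phi^1_\EE(P^*)=0$ and then reads off the isomorphism directly from the long exact cohomology sequence of the triangle $\Phi_\EE(P^*)\to\Phi_{\EE^{**}}(P^*)\to P^*$; this is shorter, but it leans on a vanishing that the preceding text only records explicitly for $Q$ (it states $\Ext^1(Q,F_x)=0$), so strictly one must re-run the double-complex computation for $P$. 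Your alternative --- propagating surjectivity from the already-established degree-zero isomorphism $\Phi^0_{\EE^{**}}(F^*)\cong F^*$ along the dual surjection $F^*\twoheadrightarrow P^*$ (exact on the right because $Q$ is locally free), using naturality of $\Phi_{\EE^{**}}\to\Id$ --- avoids the $\Ext^1$ input entirely, at the price of invoking the functoriality of the triangle, which the paper does assert. Both routes are sound; yours is marginally more self-contained relative to what has actually been proved at that point in the text, while the paper's is more symmetric and delivers the isomorphism in a single stroke from the two vanishings.
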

\begin{proof}
By the semi-continuity of direct images
$\Phi^0_{\EE^{**}}(P^*)$ is locally-free of rank 
$r(P)$. We also have $\Hom(P,F_y)=0=\Ext^1(P,F_y)$ and so $\Phi^0_\EE(P^*)=0=\Phi^1_{\EE}(P^*)$.
The the cohomology of the triangle~(\ref{eq-tri}) provides the
required isomorphism.
\end{proof}

If we use the Leray-Serre spectral sequence for $\pi_2$ we see that
\begin{align*}
H^0((P^*\boxtimes
P)\otimes\EE^{**})&\cong H^0(R^0\pi_{2*}(\pi_1^*P^*\otimes\EE^{**})\otimes P)\\
&\cong H^0(R^0\Phi_{\EE^{**}}(P^*)\otimes P)\\
&\cong H^0(P^*\otimes P).\end{align*}
So we have natural isomorphisms $H^0((P^*\boxtimes
P)\otimes\EE)\cong\Hom(P^*,P^*)\cong\C\langle\id\rangle$
and dually we also have $H^0((P\boxtimes
P^*)\otimes\EE)\cong\Hom(P^*,P^*)$. 
We can conclude that there are unique maps (up to scalars) $\alpha:P\boxtimes
P^*\to\EE$ and $\beta:\EE\to P\boxtimes P^*$. If we apply
$R^0\pi_{2*}\compo (P^*\boxtimes P)\otimes(-)$ to these maps we obtain the
maps $\alpha'$ and $\beta': P^*\otimes P\to P^*\otimes P$. But
$\alpha'|_\OO$ has image $\OO$ and $\beta'$ is non-zero on this copy
of $\OO$ (corresponding to the identity element in $P^*\otimes
P$). Hence, $\beta'\compo\alpha'$ is not zero and so
$\beta\compo\alpha$ is also not zero. But $P$ is simple and thus
$P\boxtimes P^*$ is also simple (using the Leray-Serre spectral sequence
again). Consequently, $\beta\compo\alpha$ is the identity map. This
implies that $\EE^{**}=(P\boxtimes P^*)\oplus Q$ for some vector
bundle $Q$. It also follows that $P$ is spherical as it is a direct
summand of $F$. 

But now, $Q$ enjoys the same properties as $\EE^{**}$ and again we can
choose a simple $P'$ such that $Q=(P'{}^*\boxtimes P')\oplus Q'$. Repeating,
we have $\EE^{**}=\bigoplus_{i=1}^n \EE_i$, where $\EE_i\cong P_i\boxtimes
P_i^*$ and $P_i$ are spherical bundles. Observe that the uniqueness
of $\alpha$ and $\beta$ imply that $\Ext^k(P_i,P_j)=0$ for all $k$ and
$i\neq j$. 

We have thus proved:
\begin{thm} Let $X$ and $Y$ be (smooth) Calabi-Yau $d$-folds. If
$\mathbb{F}\to X\times Y$ is a family of properly 
torsion-free sheaves over $X$ parametrized by $Y$ with 0-dimensional
singularity sets and $\Phi_\mathbb{F}$ is a Fourier-Mukai Transform then 
\begin{enumerate}
\item there is a isomorphism $\phi:Y\to X$ and
\item there exists a unique strongly spherical collection of bundles
  $\Gamma=\{P_i\}_{i=0}^n$ on $X$ such that $(1\times\phi)^*\Phi_\Gamma=\Phi_F$.
\end{enumerate}
\end{thm}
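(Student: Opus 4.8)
The plan is to transfer everything onto the double dual $\EE^{**}$ of the universal kernel and to peel it apart, one summand at a time, into pieces of the form $P\boxtimes P^*$ with $P$ a spherical bundle. First I would settle part (1). Each $F_y$ is torsion-free with zero-dimensional singularity set, so $F_y\to F_y^{**}$ is surjective with quotient $\OO_{T_y}$ supported on a finite subscheme $T_y\subset X$, and sending $y$ to this singular locus defines a morphism $Y\to X$. The double duals all coincide, giving a single sheaf $F:=F_y^{**}$ that is locally-free away from the singular point of each member of the family and hence, as these points sweep out $X$, locally-free everywhere. Completeness of the moduli space then makes the singularity map an isomorphism $\phi$, and I normalise to $Y=X$ and $\phi=\id$.

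Next I would record the numerical input from the double exact sequence of $\Ext^*(-,-)$ applied to $0\to F_x\to F\to\OO_x\to 0$: namely $\dim\Hom(F,F)=\rk F$, $\Hom(F,F)\cong\Hom(F,\OO_x)$, and $\Ext^i(F,F)=0$ for $1\leq i\leq d-1$. If $\rk F=1$ then $F$ is exceptional and we are in the case $n=0$; otherwise $\dim\Hom(F,F)>1$ furnishes a non-invertible endomorphism, which I factor through a reflexive sheaf $P$ with quotient $Q:=F/P$. A further $\Hom$ computation with the double exact sequences of the three short exact sequences relating $F_x$, $P$ and $Q$ shows that $P$ and $Q$ are locally-free, and the technical lemma on non-simple torsion-free sheaves then lets me take $P$ simple and minimal, with every non-zero map $P\to F$ injective.

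The heart of the matter, and the step I expect to be the main obstacle, is globalising this pointwise splitting over the whole family. Here I would pass to the universal sheaf $\EE$, arrange $\EE^{**}/\EE\cong\OO_\Delta$ after twisting by $\pi_2^*(\pi_{2*}S)^*$, and run the Leray--Serre spectral sequence for $\pi_2$ to compute $H^0((P^*\boxtimes P)\otimes\EE)\cong\Hom(P^*,P^*)\cong\C$ together with its dual. This pins down maps $\alpha:P\boxtimes P^*\to\EE$ and $\beta:\EE\to P\boxtimes P^*$ that are unique up to scalar. The delicate point is to upgrade $\beta\compo\alpha\neq 0$ to $\beta\compo\alpha=\id$: applying $R^0\pi_{2*}((P^*\boxtimes P)\otimes(-))$ reduces the composite to its behaviour on the copy of $\OO$ corresponding to $\id_{P^*}$, where it is non-zero, and simplicity of $P\boxtimes P^*$ (again via the spectral sequence) forces it to be a scalar, necessarily $1$. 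This produces a genuine direct-sum decomposition $\EE^{**}=(P\boxtimes P^*)\oplus Q$; restricting to a fibre exhibits $P$ as a summand of $F$, whence $P$ inherits the $\Ext$-vanishing of $F$ and, being simple, is spherical.

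Finally I would iterate. The complement $Q$ satisfies exactly the formal hypotheses used for $\EE^{**}$, so an induction on rank yields $\EE^{**}=\bigoplus_{i=1}^n P_i\boxtimes P_i^*$ with each $P_i$ a spherical bundle. The uniqueness of $\alpha$ and $\beta$ at each stage forces $\Ext^k(P_i,P_j)=0$ for $i\neq j$ and all $k$, so $\Gamma=\{P_i\}$ is strongly spherical. Since $\EE$ is recovered from $\EE^{**}=\bigoplus_i P_i\boxtimes P_i^*$ as the kernel of the canonical map to $\OO_\Delta$, it coincides with the limit $E_{1,2,\ldots,n}$ defining $\Phi_\Gamma$; hence $\Phi_\EE=\Phi_\Gamma$, and undoing the normalisation gives $(1\times\phi)^*\Phi_\Gamma=\Phi_{\mathbb F}$. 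Uniqueness of $\Gamma$ follows from the essential uniqueness of the decomposition of $\EE^{**}$ into its indecomposable summands $P_i\boxtimes P_i^*$.
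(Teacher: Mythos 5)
Your proposal follows essentially the same route as the paper: reduce to the double dual $\EE^{**}$, extract a simple locally-free summand $P$ via the technical lemma on non-simple torsion-free sheaves, globalise the splitting using the unique-up-to-scalar maps $\alpha:P\boxtimes P^*\to\EE^{**}$ and $\beta:\EE^{**}\to P\boxtimes P^*$ produced by the Leray--Serre computation, and iterate. Your closing remarks on identifying $\EE$ with the limit $E_{1,\ldots,n}$ and on uniqueness of $\Gamma$ via the indecomposable decomposition make explicit two points the paper leaves implicit, but the argument is the same.
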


In the case of a K3 surface, if $\Pic X=\ZZ\langle h\rangle$ then
strongly spherical collections can only have cardinality $1$. This can
be easily seen from the numerical invariants of such a collection. In
that case, we recover Yoshioka's result (\cite{YoshAb}) that a family of properly torsion-free
sheaves giving rise to an FM transform arise from a spherical
object. But in general, this will not be the case. For example, if $L$
is a line bundle on a K3 surface whose sheaf cohomology vanishes in
every degree then $\{\OO_X,L\}$ is a strongly spherical collection.


\section*{Acknowledgements} 
The author would like to thank Will Donovan for useful comments and
Tom Bridgeland and Richard Thomas for several helpful 
suggestions on an early draft of the paper.

\bibliographystyle{amsplain}
\bibliography{p54}

\end{document}